\documentclass[a4paper, 10pt]{amsart}

\usepackage{amsthm, amsmath, amssymb}
\usepackage{xcolor}
\usepackage{enumitem}

\topmargin 0mm \evensidemargin 15mm \oddsidemargin 15mm \textwidth
140mm \textheight 230mm

\usepackage[noadjust]{cite}
\usepackage[colorlinks, citecolor=red, urlcolor=blue, bookmarks=false, hypertexnames=true]{hyperref}

\theoremstyle{plain}
\newtheorem{theorem}{Theorem}[section]
\newtheorem*{theorem*}{Theorem \ref{thm:appl}}
\newtheorem{proposition}[theorem]{Proposition}

\newtheorem{corollary}[theorem]{Corollary}
\newtheorem{lemma}[theorem]{Lemma}

\theoremstyle{definition}

\numberwithin{equation}{section}

\newcommand{\inner}[2]{\langle #1,#2 \rangle}

\DeclareMathOperator{\trace}{trace}
\DeclareMathOperator{\grad}{grad}

\DeclareMathOperator{\Div}{div}

\DeclareMathOperator{\Hess}{Hess}

\allowdisplaybreaks

\title[Compact biconservative hypersurfaces]{ compact spacelike biconservative hypersurfaces in de Sitter space}
\author{Aykut Kayhan}
\address{Permanent address: Mathematics and Science Education, Maltepe University, Istanbul, 34480, Turkey}
\address{Current address: Faculty of Mathematics, Al. I. Cuza University of Iasi, Blvd. Carol I, no. 11, 700506 Iasi, Romania} \email{aykutkayhan@maltepe.edu.tr}
	
\subjclass[2020]{Primary 53C42. Secondary 53C40.}
\keywords{biconservative hypersurfaces, compact, spacelike, de Sitter}

\thanks{The  author was supported by the Scientific and Technological Research Council of T\"{u}rkiye (T\"{U}B\.{I}TAK), under the 2219-International Post-Doctoral Research Fellowship Programme (grant no: 1059B192300478). The opinions and views expressed herein are those of the authors and do not reflect those of T\"{U}B\.{I}TAK}

\begin{document}
	\maketitle
	
	\begin{abstract}
In this paper, we investigate the geometry of compact spacelike biconservative hypersurfaces with constant scalar curvature in de Sitter space $\mathbb{S}_1^{m+1}(c)$, under some geometric constraints. Our results extend the understanding of rigidity properties of such hypersurfaces in pseudo-Riemannian settings.
	\end{abstract}
	
	\section{Introduction}
	 Pseudo Riemannian manifold $N_1^{m+1}(c)$ with index 1 and of constant curvature $c$,  depending on whether $c>0,\ c=0,\ c<0$, is called de Sitter space, Minkowski space, or anti-de Sitter space, respectively, and is denoted by $\mathbb{S}_1^{m+1}(c),\ \mathbb{R}_1^{m+1}(c)$ or $\mathbb{H}_1^{m+1}(c)$. These
	 three spacetimes are known as the Lorentzian space forms.
	
Let $M^m$ be $m$-dimensional hypersurface in a $(m+1)$-dimensional Lorentzian space form $N_1^{m+1}(c)$ 	with mean curvature vector $H=f\eta$ and shape operator $A_\eta$ such that $\eta$ is unit normal vector field and $\inner{\eta}{\eta}=\varepsilon=\pm 1$. $M^m$ is said to be biconservative if and only if
\begin{equation}\label{Agradf}
A_\eta(\grad f)+\varepsilon\frac{mf}{2}\grad f=0.
\end{equation}

  Biconservative manifolds have attracted the attention of many geometers in both Riemannian and Pseudo-Riemannian setting in last three decade years (For a detailed summary, see \cite{chenrecentdevelopment}). Although there are numerous studies in the literature in the Riemannian context, there are not as many studies in the Pseudo-Riemannian context. In particular, if the hypersurface is spacelike (i.e., $\varepsilon=-1$) the number of studies conducted is even more limited, as we can summarize below:

Yu Fu classified the  timelike ($\varepsilon=1$) and spacelike  surfaces, known as non-degenerate surfaces, in 3-dimensional Lorentizan space forms in \cite{fu2013bi} and \cite{fu2015explicit}. He demonstrated that such surfaces are either CMC or rotational. Later, Yu Fu and Turgay provided examples of spacelike biconservative hypersurfaces while attempting to classify such hypersurfaces in Minkowski 4-space with a diagonalizable shape operator and two distinct principal curvatures \cite{fu2016complete}. For more, see \cite{chenrecentdevelopment}.

	In a more general sense,  As we mentioned before, a hypersurface $M^m$ of $N_1^{m+1}(c)$ is called spacelike if $\varepsilon=-1$, or equivalently, the metric induced on $M^m$ from that of ambient space is positive definite. Spacelike hypersurfaces are fundamental in general relativity, serving as initial data surfaces for the Cauchy problem in arbitrary spacetimes. They provide a natural framework for studying gravitational wave propagation and the dynamical evolution of spacetime curvature. Existence and uniqueness results for such hypersurfaces have been established under various geometric and analytic conditions on the ambient spacetime, as demonstrated in works like \cite{B}, \cite{CB}, \cite{CFM}, \cite{N} and \cite{S}. 
	
When the ambient space is de Sitter space, Zheng \cite{zheng1995}  considered $m-$dimensional compact spacelike  hypersurfaces in de Sitter space $\mathbb{S}_1^{m+1}(c)$ with constant scalar curvature and obtained the following theorem.
	\begin{theorem}\cite{zheng1995}\label{zhengtheo}
		Let $M^m$ be an $m$-dimensional compact spacelike hypersurface immersed in de Sitter space $\mathbb{S}_1^{m+1}(c)$ with constant scalar curvature $m(m-1)r$. If $M^m$ has non-negative sectional curvature  and satisfies $r<c$ then $M^m$ isometric to a sphere.
	\end{theorem}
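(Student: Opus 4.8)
The plan is to reduce the theorem to a totally umbilical conclusion and then to identify the umbilical model. Write $A:=A_\eta$, let $H:=\frac{1}{m}\trace A$ be the mean curvature function and $S:=|A|^2$. Choosing at each point an orthonormal frame $e_1,\dots,e_m$ that diagonalises $A$ with principal curvatures $\kappa_1,\dots,\kappa_m$, the Gauss equation for a spacelike hypersurface ($\varepsilon=-1$) gives the sectional curvatures $K_{ij}=c-\kappa_i\kappa_j$ of $\Span\{e_i,e_j\}$; tracing yields
\begin{equation*}
m(m-1)r=m(m-1)c-\big(m^2H^2-S\big),
\end{equation*}
so the constant-scalar-curvature hypothesis is equivalent to $S=m^2H^2+m(m-1)(r-c)$. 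For the traceless operator $\Phi:=A-H\,\id$ this reads $|\Phi|^2=S-mH^2=m(m-1)\big(H^2-(c-r)\big)$. Because $r<c$, this forces $H^2\ge c-r>0$ everywhere, so $H$ never vanishes and, by connectedness, may be assumed positive. The goal becomes showing $\Phi\equiv 0$, equivalently $H^2\equiv c-r$.

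First I would introduce the Cheng--Yau operator $\square f:=\trace\big((mH\,\id-A)\circ\Hess f\big)$. Since the ambient space has constant curvature, the Codazzi equation makes the covariant derivative of $A$ totally symmetric, whence $\Div(mH\,\id-A)=0$; therefore $\square$ is in divergence form and, $M$ being compact, $\int_M\square(mH)\,dV=0$ with no boundary term. A Simons-type Bochner formula for $\frac12\Delta S$, specialised to this spacelike setting, then gives
\begin{equation*}
\square(mH)=mH\,\Delta(mH)-\frac12\Delta S+|\nabla A|^2+\frac12\sum_{i,j}K_{ij}(\kappa_i-\kappa_j)^2,
\end{equation*}
where the coefficients $K_{ij}=c-\kappa_i\kappa_j$ are precisely the sectional curvatures above. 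Integrating, and using $\int_M\Delta S=0$ together with $\int_M mH\,\Delta(mH)=-\int_M|\nabla(mH)|^2$, I obtain
\begin{equation*}
\int_M\Big(|\nabla A|^2-|\nabla(mH)|^2\Big)\,dV+\frac12\int_M\sum_{i,j}K_{ij}(\kappa_i-\kappa_j)^2\,dV=0.
\end{equation*}

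Now the two hypotheses enter in tandem. The decomposition $A=\Phi+H\,\id$ gives $|\nabla A|^2-|\nabla(mH)|^2=|\nabla\Phi|^2-m(m-1)|\nabla H|^2$. Differentiating the identity $|\Phi|^2=m(m-1)(H^2-(c-r))$ yields $|\Phi|\,|\nabla|\Phi||=m(m-1)\,H\,|\nabla H|$, so the Kato inequality $|\nabla\Phi|^2\ge|\nabla|\Phi||^2$ combined with $c-r>0$ forces
\begin{equation*}
|\nabla\Phi|^2-m(m-1)|\nabla H|^2\ge m(m-1)\,\frac{c-r}{H^2-(c-r)}\,|\nabla H|^2\ge 0
\end{equation*}
at points where $\Phi\neq 0$, and the inequality persists where $\Phi=0$. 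Meanwhile the non-negative sectional curvature hypothesis makes the reaction integrand $\sum_{i,j}K_{ij}(\kappa_i-\kappa_j)^2\ge 0$. Thus the integral identity expresses the vanishing of a sum of two non-negative integrals, so both vanish pointwise: $\nabla H\equiv 0$, hence $H$ and $S$ are constant, and $\sum_{i,j}K_{ij}(\kappa_i-\kappa_j)^2\equiv 0$. An algebraic analysis of this last relation under $K_{ij}\ge0$, using that compactness excludes the non-compact isoparametric models which would carry distinct principal curvatures, forces $\Phi\equiv 0$, so that $M$ is totally umbilical with $H^2\equiv c-r$.

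To finish, a connected compact totally umbilical spacelike hypersurface of $\mathbb{S}_1^{m+1}(c)$ has constant principal curvature $\sqrt{c-r}$; realising $\mathbb{S}_1^{m+1}(c)$ as a hyperquadric in $\mathbb{R}_1^{m+2}$, such a hypersurface is a transverse intersection with an affine hyperplane and hence a round sphere, which gives the asserted isometry. I expect the principal obstacle to be the careful sign bookkeeping in the Lorentzian Simons formula: because the unit normal is timelike several curvature and cubic terms change sign relative to the Riemannian case, and it is exactly this bookkeeping that must produce the reaction term with the favourable coefficient $+\frac12\sum_{i,j}K_{ij}(\kappa_i-\kappa_j)^2$ so that non-negative sectional curvature can be exploited. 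The secondary delicate points are the Kato-type estimate (which relies essentially on $r<c$) and the exclusion of the non-umbilical configurations with some $K_{ij}=0$.
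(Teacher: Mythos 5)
First, a point of comparison: the paper itself contains no proof of this statement --- Theorem \ref{zhengtheo} is quoted from Zheng \cite{zheng1995} as background. So your proposal can only be measured against Zheng's original argument and against the machinery this paper deploys for its own results (Theorem \ref{maintheo}), and in that respect your strategy is exactly the right one: a divergence-free tensor (here $mH\id-A$, in the paper $f^2A$) feeding the Cheng--Yau operator, the Simons-type identity \eqref{Cheng-Yau formula}, integration over the compact $M$, and a splitting of the integrand into two non-negative pieces, one controlled by Kato's inequality together with $r<c$, the other by the non-negative sectional curvature. I checked your computations: the traced Gauss equation, the identity $|\nabla A|^2-|\nabla(mH)|^2=|\nabla\Phi|^2-m(m-1)|\nabla H|^2$, the integral formula, and the Kato estimate are all correct, and they do give $\nabla H\equiv 0$ and then $\nabla\Phi\equiv 0$, i.e.\ $\nabla A\equiv 0$. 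Two smaller remarks: on the set $\{\Phi=0\}$ you should say explicitly that $|\Phi|^2$ attains its minimum there, so $\nabla|\Phi|^2=2m(m-1)H\nabla H=0$ and hence $\nabla H=0$, which is why your inequality ``persists''; and your worry about Lorentzian sign bookkeeping in the Simons formula is unfounded, since \eqref{Cheng-Yau formula} is a purely intrinsic Riemannian identity for Codazzi tensors --- the Lorentzian signature enters only through the Gauss equation $K_{ij}=c-\kappa_i\kappa_j$ and is already absorbed at that stage.

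The genuine gap is the final step, which you compress into ``an algebraic analysis \dots\ using that compactness excludes the non-compact isoparametric models.'' This step is not algebraic, and it is where the remaining real work lies. What you actually have at that stage is: $A$ parallel (so constant principal curvatures with parallel eigendistributions) and $K_{ij}(\kappa_i-\kappa_j)^2\equiv 0$, so any two distinct principal curvatures $\kappa\neq\mu$ satisfy $\kappa\mu=c$; in particular there are at most two distinct ones. The non-umbilical case cannot be excluded intrinsically: by de~Rham, $M$ is then locally a metric product, and compact manifolds such as $S^1\times\mathbb{S}^{m-1}(c_2)$ do carry product metrics of non-negative sectional curvature, so ``compactness'' applied to the abstract metric rules out nothing. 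What is needed is extrinsic rigidity: a spacelike hypersurface with parallel shape operator and these curvature data is locally congruent to a hyperbolic cylinder $\mathbb{H}^1(c_1)\times\mathbb{S}^{m-1}(c_2)\subset\mathbb{S}_1^{m+1}(c)$; lifting to the universal cover, using completeness and (for $m\geq 3$) the simple connectivity and injectivity of the model, one shows the deck group must act trivially, so $M$ itself would be the non-compact model, a contradiction (with a separate but similar argument for $m=2$). The umbilical case also still needs an exclusion: $r=0$, i.e.\ $H^2=c$, is compatible with all your hypotheses up to that point, but the corresponding umbilical model is the flat non-compact slice of $\mathbb{S}_1^{m+1}(c)$; only after ruling it out by the same completeness/covering reasoning (remember $M$ is merely immersed, not embedded, so ``transverse intersection with a hyperplane'' is not immediate) do you land on the round sphere $\mathbb{S}^m(r)$ with $r>0$. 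None of this is difficult, and it is exactly what Zheng-type proofs carry out, but as written your proposal asserts the conclusion precisely at the point where umbilicity is actually established.
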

	In this paper, we interested in the study of compact spacelike \textit{biconservative} hypersurfaces in de Sitter space with constant scalar curvature. To accomplish this, replacing the hyphotesis $r<c$ with  biconservativity, we obtained the following results deduced from the main theorem of this paper (see Theorem \ref{maintheo}).
	\begin{corollary}\label{firsttheo}
		Let $\varphi:M^m\to \mathbb{S}_1^{m+1}(c)$ be compact spacelike  hypersurface  in de Sitter space $\mathbb{S}_1^{m+1}(c)$ with constant scalar curvature $m(m-1)r$ and non-negative sectional curvature.  If $M^m$ is non-minimal biconservative then $M^m$ isometric to a sphere  $\mathbb{S}^m(c_1),\ 0<c_1<c$.
	\end{corollary}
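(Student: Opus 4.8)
The plan is to push the hypotheses into the regime covered by Theorem~\ref{zhengtheo}: since that theorem already produces a sphere from constant scalar curvature and non-negative sectional curvature once $r<c$, it suffices to show that biconservativity and non-minimality force the strict inequality $r<c$. First I would record the Gauss data for a spacelike hypersurface in $\mathbb{S}_1^{m+1}(c)$. Writing $S=|A_\eta|^2$ and noting $(\trace A_\eta)^2=m^2f^2$, the Gauss equation gives $K(e_i,e_j)=c-\lambda_i\lambda_j$ on principal directions and
\[
m(m-1)r=m(m-1)c+S-m^2f^2 .
\]
Thus constancy of $r$ is equivalent to constancy of $S-m^2f^2=-2\sigma_2(A_\eta)$, and $r<c$ is equivalent to $\sigma_2(A_\eta)=\sum_{i<j}\lambda_i\lambda_j>0$. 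Non-negative sectional curvature reads $\lambda_i\lambda_j\le c$ for $i\neq j$ and in particular yields $\ric\ge0$. The biconservative equation \eqref{Agradf} with $\varepsilon=-1$ becomes $A_\eta(\grad f)=\tfrac{m}{2}f\,\grad f$, so wherever $\grad f\neq0$ the gradient is a principal direction of $A_\eta$ with principal curvature $\tfrac{m}{2}f$.

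The crux is to show that $M^m$ has constant mean curvature. Here I would feed the two structural facts available on a space-form hypersurface, namely the contracted Codazzi identity $\Div A_\eta=\grad(\trace A_\eta)$ and the biconservative relation \eqref{Agradf}, into the Bochner formula
\[
\tfrac12\Delta|\grad f|^2=|\Hess f|^2+\inner{\grad f}{\grad(\Delta f)}+\ric(\grad f,\grad f),
\]
and integrate over the compact manifold $M^m$. The curvature term is non-negative by hypothesis, and the remaining terms are to be controlled using \eqref{Agradf} together with the constancy of $S-m^2f^2$; the aim is to show that the total integrand is non-negative, forcing $\Hess f\equiv0$ and hence $\grad f\equiv0$. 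I expect this reduction to constant mean curvature to be the main obstacle: \eqref{Agradf} only identifies $\grad f$ as an eigendirection of $A_\eta$ and says nothing a priori about the transverse part of $\Hess f$, so compactness, the sign of the curvature, and the constancy of the scalar curvature must be exploited simultaneously rather than through any single divergence identity (indeed the naive integration by parts of $\trace(A_\eta\Hess f)$ is circular and yields no information).

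With $f$ constant and non-zero, $M^m$ becomes a non-minimal hypersurface of constant mean curvature and constant scalar curvature, so $S$ and hence $\sigma_2(A_\eta)$ are constant, and it remains to show $\sigma_2(A_\eta)>0$, equivalently $r<c$. Since $f$ and $S$ are now constant, a Simons-type identity for $A_\eta$ collapses to an algebraic statement about the principal curvatures, and I would combine the pointwise bounds $\lambda_i\lambda_j\le c$ from non-negative sectional curvature with $f\neq0$ to exclude the borderline case $\sigma_2(A_\eta)\le0$. Once $r<c$ is established, Theorem~\ref{zhengtheo} applies directly and shows that $M^m$ is isometric to a round sphere. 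Finally, a round sphere has constant sectional curvature equal to its normalized scalar curvature, so it is $\mathbb{S}^m(c_1)$ with $c_1=r$; positivity of $c_1$ is automatic for a sphere and $c_1=r<c$ gives the asserted range $0<c_1<c$. This is exactly the conclusion that Theorem~\ref{maintheo} packages.
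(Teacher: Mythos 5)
Your proposal correctly identifies the crux --- reducing to constant mean curvature --- but it does not actually prove it, and the route you sketch for it cannot work as stated. Integrating the Bochner formula over compact $M$, the middle term contributes $\int_M \inner{\grad f}{\grad(\Delta f)}=-\int_M(\Delta f)^2$, which has the \emph{wrong} sign: the identity then reads $0=\int_M\big(|\Hess f|^2-(\Delta f)^2+\ric(\grad f,\grad f)\big)$, and since $|\Hess f|^2\geq (\Delta f)^2/m$ only, non-negative Ricci curvature gives no contradiction for a non-constant $f$ (any first eigenfunction on a round sphere shows the inequality is compatible with $\grad f\not\equiv 0$). You acknowledge this yourself when you call the step ``the main obstacle,'' but an acknowledged obstacle with an ``aim to show'' is a gap, not a proof; the biconservativity hypothesis must enter through a concrete mechanism, and none is given. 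The paper's mechanism is entirely different and is the one idea your sketch is missing: biconservativity is \emph{equivalent} to the tensor $T_1=f^2A$ being divergence-free (Proposition \ref{f2A}), so the Cheng--Yau operator $\square\alpha=\inner{T_1}{\Hess\alpha}$ is self-adjoint on the compact $M$ and $\int_M\square(mf)=0$. Feeding the Cheng--Yau formula \eqref{Cheng-Yau formula New} and the constancy of the scalar curvature into this identity produces \eqref{mainequ2},
\begin{equation*}
0=\int_M f^2\Big\{2m^2|\nabla f|^2+|\nabla A|^2+\tfrac12\sum_{i,j=1}^m R_{ijij}(\lambda_i-\lambda_j)^2\Big\},
\end{equation*}
in which \emph{every} term is non-negative (this is exactly where non-negative sectional curvature is used), forcing $f^2|\nabla A|^2=0$ and hence $\grad f=0$. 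The weight $f^2$ coming from $T_1$ is what makes all signs cooperate; no such weighting appears in your Bochner approach.

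There is a second, smaller gap at the end. After CMC you propose to verify $r<c$ (equivalently $\sigma_2(A_\eta)>0$) via an unspecified ``Simons-type identity'' so as to invoke Theorem \ref{zhengtheo}; this step is again only asserted, and it is not obvious how the pointwise bounds $\lambda_i\lambda_j\leq c$ together with $f\neq 0$ exclude $\sigma_2\leq 0$. The paper avoids this entirely: once $f$ and $R$ are both constant they are trivially linearly related, $f$ attains its maximum by compactness, and the sphere conclusion follows at once from Cheng's theorem (\cite{cheng1990rkh}, Theorem 1) rather than from Zheng's. Also note that Theorem \ref{maintheo} concludes only that $M$ is CMC, not that $M$ is a sphere, so it does not ``package'' the final conclusion as you state.
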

	
		\begin{corollary}\label{licorollary}
		Let $\varphi:M^m\to \mathbb{S}_1^{m+1}(c)$ be compact spacelike  hypersurface  in de Sitter space $\mathbb{S}_1^{m+1}(c)$ with constant  scalar curvature.  If $M^m$ is non-minimal biconservative and $f^2\leq 4(m-1)c/m^2$, then $M^m$ isometric to a sphere  $\mathbb{S}^m(c_1),\ 0<c_1<c$.
	\end{corollary}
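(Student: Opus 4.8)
The plan is to reduce Corollary \ref{licorollary} to Corollary \ref{firsttheo} by showing that the pinching hypothesis $f^2 \le 4(m-1)c/m^2$, together with biconservativity and constant scalar curvature, forces the sectional curvature of $M^m$ to be non-negative. First I would unpack the biconservativity condition: since $M^m$ is spacelike we have $\varepsilon=-1$, so \eqref{Agradf} reads $A_\eta(\grad f)=\tfrac{mf}{2}\grad f$. Hence on the open set $U=\{p\in M: \grad f(p)\neq 0\}$ the field $\grad f$ is a principal direction with principal curvature $\tfrac{mf}{2}$. The essential structural input, which is exactly what the proof of Theorem \ref{maintheo} provides, is that a non-minimal biconservative hypersurface with constant scalar curvature has on $U$ precisely two distinct principal curvatures: $\lambda_1=\tfrac{mf}{2}$ of multiplicity one, and, because $\trace A_\eta=mf$, a second principal curvature $\mu=\tfrac{mf}{2(m-1)}$ of multiplicity $m-1$.

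Next I would compute the sectional curvatures from the Gauss equation. For a spacelike hypersurface in $\mathbb{S}_1^{m+1}(c)$, the sectional curvature of the plane spanned by two orthonormal principal directions $e_i,e_j$ is $K(e_i,e_j)=c-\lambda_i\lambda_j$ (consistent with the conclusion $0<c_1<c$ for a totally umbilical sphere). With the two principal curvatures above, the only possible values are
\[
K_{1j}=c-\frac{mf}{2}\cdot\frac{mf}{2(m-1)}=c-\frac{m^2 f^2}{4(m-1)},\qquad K_{jk}=c-\frac{m^2 f^2}{4(m-1)^2}.
\]
The first of these is non-negative exactly when $f^2\le 4(m-1)c/m^2$, so the hypothesis is equivalent to $K_{1j}\ge 0$; and since $(m-1)^2\ge m-1$, the same bound gives $K_{jk}\ge 0$ a fortiori. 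Thus the pinching condition is precisely what makes every sectional curvature non-negative on $U$.

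Finally I would extend the conclusion to all of $M^m$ and close the argument. On the complement of $U$ the function $f$ is locally constant, so the principal-curvature description persists by continuity (or the piece is CMC and the bound is read off directly), and non-negativity of the sectional curvature holds on all of $M^m$. At that point every hypothesis of Corollary \ref{firsttheo} is in place — compact, spacelike, constant scalar curvature $m(m-1)r$, non-minimal biconservative, and non-negative sectional curvature — and that corollary yields that $M^m$ is isometric to a round sphere $\mathbb{S}^m(c_1)$ with $0<c_1<c$. The main obstacle is not this reduction but the structural claim that the non-gradient principal curvatures all coincide (the ``two distinct principal curvatures'' fact); this is where constant scalar curvature must genuinely be combined with biconservativity, and it is the content already extracted in proving Theorem \ref{maintheo}. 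Once that is granted, the passage from the pinching bound to non-negative sectional curvature is a short computation, with the only delicate point being the behavior at the zeros of $\grad f$.
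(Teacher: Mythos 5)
Your reduction to Corollary \ref{firsttheo} rests on a structural claim that is nowhere established: that on $U=\{\grad f\neq 0\}$ a non-minimal biconservative hypersurface with constant scalar curvature has exactly two distinct principal curvatures, the non-gradient ones all coinciding. You attribute this to the proof of Theorem \ref{maintheo}, but that proof contains no such spectral information about $A$: it is a purely integral argument which, \emph{assuming} non-negative sectional curvature, shows every term in \eqref{mainequ2} is non-negative and concludes $f^2\vert\nabla A\vert^2=0$, hence CMC. Biconservativity and constant scalar curvature only give you the single eigenvalue $\lambda_1=\tfrac{mf}{2}$ along $\grad f$ and the relation $\vert A\vert^2=m^2f^2-m(m-1)(c-R)$; for $m\geq 3$ these do not force the remaining $m-1$ principal curvatures to be equal, so your computation of the sectional curvatures $K_{1j}$ and $K_{jk}$ has no basis. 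The claim is also convention-sensitive: with the paper's conventions ($\trace A=-mf$, $\lambda_1=+\tfrac{mf}{2}$), the hypothetical common eigenvalue would be $-\tfrac{3mf}{2(m-1)}$, giving $K_{jk}=c-\tfrac{9m^2f^2}{4(m-1)^2}$, which the hypothesis $f^2\leq 4(m-1)c/m^2$ does not make non-negative unless $m\geq 10$. Finally, your handling of the complement of $U$ fails independently: on a CMC piece the biconservativity equation is vacuous, and constancy of $f$ and of the scalar curvature fix only $\trace A$ and $\vert A\vert^2$, not the individual principal curvatures, so non-negative sectional curvature cannot be ``read off directly'' there either.

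The paper's proof avoids pointwise curvature positivity altogether. It quotes Li's Okumura-type estimate \eqref{li1} together with the square-completion identity \eqref{li2}, which bound the \emph{weighted sum} $\tfrac 12\sum_{i,j}R_{ijij}(\lambda_i-\lambda_j)^2$ from below by
\begin{equation*}
(\vert A\vert^2-mf^2)\left\{\left(\sqrt{\vert A\vert^2-mf^2}-\tfrac 12(m-2)\vert f\vert\sqrt{\tfrac{m}{m-1}}\right)^2+m\left(c-\tfrac{m^2}{4(m-1)}f^2\right)\right\},
\end{equation*}
a quantity involving only $\vert A\vert^2$, $f$ and $c$, hence requiring no knowledge of the spectrum of $A$. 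The pinching hypothesis makes this non-negative, and then the integral formula \eqref{mainequ2} (valid for any compact biconservative hypersurface with constant scalar curvature) forces $f^2\vert\nabla f\vert^2=0$, giving CMC and then the sphere conclusion via Cheng's theorem. This weighted, integrated inequality is genuinely weaker than the pointwise condition $K\geq 0$ that your route would need, and that is precisely why the paper's argument closes while yours does not.
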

	
	In fact, Corollary \ref{licorollary} is a generalizaton of the theorem (see Theorem 4.1 in \cite{li1997}) given by Haizhong Li to biconservative hypersurfaces. Because 	Li used the condition $r<c$ in that theorem. By replacing this condition with biconservativity and having constant scalar curvature, we obtained a generalization of this theorem to biconservative hypersurfaces
	
Afterward, we aimed to relax the fact that scalar curvature is constant with the condition of $m(m-1)r=kf$( $k=\mbox{const }>0$). Such hypersurfaces in de Sitter space have been studied by Qing-ming Cheng \cite{cheng1990rkh}, and in the case where the hypersurface is complete, several results have been established. Therefore, this emphasizes the importance of investigating the compactness condition. In this manner, we obtained the following theorem.
	
\begin{theorem}\label{extra result}
	Let $\varphi:M^m\to\mathbb{S}_1^{m+1}(c)$ be a compact spacelike  hypersurface in de Sitter space with non-negative sectional curvature. If $m(m-1)r=kf$( $k=\mbox{const }>0$) and $M$ is biconservative then $M$ is isometric to a sphere $\mathbb{S}^{m}(c_0),\ 0<c_0<c$.
\end{theorem}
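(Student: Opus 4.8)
The plan is to follow the strategy behind Theorem \ref{maintheo}, reducing everything to showing that the mean curvature $f$ is constant and that $M^m$ is totally umbilical, and then identifying the resulting hypersurface. Throughout write $S=\trace(A_\eta^2)$ and let $\phi=A_\eta-f\,\id$ be the traceless part of the shape operator, so that $\trace\phi=0$ and $|\phi|^2=S-mf^2$. Since $\varepsilon=-1$, the Gauss equation for a spacelike hypersurface in $\mathbb{S}_1^{m+1}(c)$ gives scalar curvature $R=m(m-1)c-m^2f^2+S$, so the hypothesis $m(m-1)r=kf$ turns into the pointwise relations
\[
S=kf+m^2f^2-m(m-1)c,\qquad |\phi|^2=kf+m(m-1)(f^2-c).
\]
In particular $S$ and $|\phi|^2$ become functions of $f$ alone, which is the feature that replaces constancy of the scalar curvature used in the earlier corollaries.

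Next I would exploit biconservativity. With $\varepsilon=-1$, equation \eqref{Agradf} reads $A_\eta(\grad f)=\tfrac{mf}{2}\grad f$ globally, so $\grad f$ is everywhere an eigenvector of $A_\eta$ with eigenvalue $\tfrac{mf}{2}$. Taking the divergence of this identity and using the traced Codazzi equation $\sum_i(\nabla_{e_i}A_\eta)e_i=\grad(\trace A_\eta)=m\grad f$ yields the pointwise identity
\[
\sum_{i,j}h_{ij}f_{,ij}=\tfrac{mf}{2}\,\Delta f-\tfrac{m}{2}\,|\grad f|^2,
\]
where $h_{ij}$ are the components of $A_\eta$. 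At the same time the Gauss relation gives the Ricci operator $(m-1)c\,\id-mf\,A_\eta+A_\eta^2$, and evaluating on the eigenvector $\grad f$ shows
\[
\ric(\grad f,\grad f)=\Big((m-1)c-\tfrac{m^2f^2}{4}\Big)|\grad f|^2 .
\]
Since non-negative sectional curvature (in particular non-negative Ricci curvature) forces the left side to be $\ge 0$, this gives the bound $f^2\le \tfrac{4(m-1)c}{m^2}$ at every point where $\grad f\neq 0$, which is exactly the bound appearing in Corollary \ref{licorollary}; propagating it across the critical set of $f$ is a point that will need care.

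The analytic heart is a Simons-type identity combined with integration over the compact manifold. A direct computation of the rough Laplacian of $A_\eta$ (using Codazzi and the Gauss expression for the intrinsic curvature, in which the $\trace(A_\eta^3)$ terms cancel) gives
\[
\tfrac12\Delta S=|\nabla A_\eta|^2+\sum_{i,j}h_{ij}(mf)_{,ij}+\Phi,\qquad \Phi=|\phi|^2\big(|\phi|^2+m(c-f^2)\big)-mf\,\trace(\phi^3).
\]
Applying the self-adjoint Cheng--Yau operator $\square u=\sum_{i,j}(mf\,\delta_{ij}-h_{ij})u_{,ij}$ to $mf$, using that $mf\,\id-A_\eta$ is divergence-free (so $\int_M\square u\,dV=0$), substituting the two displayed identities and the biconservative identity above, and integrating, the explicit first-order term carrying $k$ integrates to zero (though $k$ survives inside $\Phi$ through $|\phi|^2$), leaving
\[
m^2\int_M|\grad f|^2\,dV=\int_M|\nabla A_\eta|^2\,dV+\int_M\Phi\,dV .
\]
Here the bound $f^2\le\tfrac{4(m-1)c}{m^2}$ together with the sharp inequality $|\trace(\phi^3)|\le\tfrac{m-2}{\sqrt{m(m-1)}}|\phi|^3$ makes the quadratic-in-$|\phi|$ factor of $\Phi$ nonnegative, so $\Phi\ge0$ pointwise; in the CMC case $\grad f\equiv 0$ this already forces $\nabla A_\eta\equiv0$ and $\phi\equiv0$.

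Once $f$ is constant the relation $m(m-1)r=kf$ makes $r$ constant, and $\phi\equiv0$ says $M^m$ is totally umbilical; a compact totally umbilical spacelike hypersurface of $\mathbb{S}_1^{m+1}(c)$ is a round sphere $\mathbb{S}^m(c_0)$. Finally $k>0$ forces $f>0$ (hence non-minimal), and $|\phi|^2=0$ reads $kf+m(m-1)(f^2-c)=0$, so $f^2=c-\tfrac{kf}{m(m-1)}<c$; since the umbilical Gauss equation gives sectional curvature $c_0=c-f^2$, we obtain $0<c_0<c$, as claimed. The main obstacle is the passage from the boxed integral identity to rigidity in the genuinely non-CMC situation: one must establish a Kato-type inequality for $|\nabla A_\eta|^2$, sharpened by the eigenvector structure of $\grad f$, strong enough to absorb the term $m^2\int_M|\grad f|^2$ against $\int_M\Phi\ge 0$ and so conclude $\grad f\equiv 0$ and $\phi\equiv 0$ simultaneously. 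The naive symmetry bound only yields $|\nabla A_\eta|^2\ge m|\grad f|^2$, so it is precisely here that the full force of biconservativity, the $k>0$ hypothesis, and the extension of the curvature bound across the critical set of $f$ must be combined.
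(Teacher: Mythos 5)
Your proposal stalls exactly where you admit it does, and the gap is genuine. With the classical Newton-tensor Cheng--Yau operator $\square u=\sum_{i,j}(mf\,\delta_{ij}-h_{ij})u_{,ij}$ (which is divergence-free by Codazzi alone, with no input from biconservativity), integration can only produce your boxed identity
\[
m^2\int_M|\grad f|^2=\int_M|\nabla A|^2+\int_M\Phi ,
\]
where the gradient term sits on the wrong side; no available Kato-type inequality absorbs it, since even the sharpened biconservative estimate (Lemma \ref{cheninequfor spclike}) gives $|\nabla A|^2\geq\frac{m^2(m+2)}{4(m-1)}|\grad f|^2$, and $\frac{m+2}{4(m-1)}\geq 1$ only when $m\leq 2$. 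The paper never uses this operator. Its key move is to take the Cheng--Yau operator associated with $T_1=f^2A$, which is divergence-free \emph{precisely because} $M$ is biconservative (Proposition \ref{f2A}). The weight $f^2$ flips the sign of the troublesome term: starting from \eqref{mainequ}, integrating $\int_M \frac{f^2}{2}\Delta|A|^2$ by parts and substituting $\grad|A|^2=m^2\grad f^2+k\grad f$ (your own pointwise relation $|A|^2=kf+m^2f^2-m(m-1)c$) yields
\[
0=\int_M \frac{m^2}{2}|\grad f^2|^2+kf|\grad f|^2+f^2\Big\{|\nabla A|^2+\tfrac12\sum_{i,j}R_{ijij}(\lambda_i-\lambda_j)^2\Big\}.
\]
Since the paper first shows $f>0$ everywhere (non-negative sectional curvature gives scalar curvature $\geq 0$, hence $f\geq 0$ from $k>0$; and $f(p)=0$ would force $m(m-1)c=-|A|^2(p)\leq 0$ by \eqref{normalizedscalar}, impossible), every summand is non-negative, and the term $kf|\grad f|^2$ alone forces $\grad f\equiv 0$. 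Then $f$ is constant and Cheng's theorem (\cite{cheng1990rkh}, Theorem 1) identifies $M$ as a round sphere --- no Simons-type quantity $\Phi$, no estimate on $\trace(\phi^3)$, and no separate umbilicity argument are needed. This sign flip, purchased by biconservativity through $T_1=f^2A$, is exactly the idea your outline is missing.

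Two secondary errors are also worth flagging. First, your Ricci bound is wrong: in the paper's (internally consistent) conventions, $\grad f$ is an eigenvector of $A$ with eigenvalue $\frac{mf}{2}$ while $\trace A=-mf$, so the Gauss equation gives
\[
\ric(\grad f,\grad f)=\Big((m-1)c+\tfrac{3m^2f^2}{4}\Big)|\grad f|^2 ,
\]
which is automatically positive and yields no restriction on $f$; you combined the eigenvalue $\frac{mf}{2}$ with the Ricci operator written for $\trace A=+mf$, two mutually inconsistent conventions, and the bound $f^2\leq 4(m-1)c/m^2$ does not follow. Second, no such bound is needed here: non-negative sectional curvature is a \emph{hypothesis} of this theorem, so $\sum_{i,j}R_{ijij}(\lambda_i-\lambda_j)^2\geq 0$ holds for free; the machinery of Li's inequality and the pinching of $\trace(\phi^3)$ belongs to the proof of Corollary \ref{licorollary}, whose weaker hypotheses you appear to have imported by mistake.
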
	
	 	
	Finally, we gave a result for compact spacelike biconservative surface  in 3-dimensional de Sitter space .
	\begin{theorem}\label{surfacetheo}
		Let $M$ be a compact spacelike biconservative surface in 3-dimensional de Sitter space. Then $M$ is totally umbilical.
	\end{theorem}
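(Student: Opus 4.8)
The plan is to prove that the mean curvature function $f$ is forced to be constant, and then to conclude by the classical rigidity of constant mean curvature compact spacelike surfaces in de Sitter space.

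First I would specialize the biconservative condition \eqref{Agradf} to the present situation $m=2$, $\varepsilon=-1$, which gives $A_\eta(\grad f)=f\,\grad f$. Hence on the open set $U=\{p\in M:\grad f(p)\neq 0\}$ the direction $\grad f$ is a principal direction with principal curvature $f$; writing $\trace A_\eta=-2f$ (so that $H=f\eta$), the second principal curvature must be $-3f$, and the two coincide only where $f=0$. I would first use the Codazzi equation of the space form, $(\nabla_X A_\eta)Y=(\nabla_Y A_\eta)X$, to rule out $f=0$ on $U$: its $e_2$–component yields $\inner{\nabla_{e_2}e_1}{e_2}\,f=-\tfrac34|\grad f|$, which is impossible when $f=0$ since $|\grad f|>0$ on $U$. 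Thus $U$ carries a smooth orthonormal principal frame $\{e_1,e_2\}$ with $e_1=\grad f/|\grad f|$, and the two principal curvatures stay distinct throughout $U$.

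Next I would read off the remaining connection coefficients from Codazzi, obtaining $\nabla_{e_1}e_1=0$ and $\inner{\nabla_{e_2}e_1}{e_2}=-\tfrac{3|\grad f|}{4f}$. Feeding these into the intrinsic expression for the Gauss curvature and equating it with the Gauss equation $K=c+\varepsilon\lambda_1\lambda_2=c+3f^2$ pins down the second derivative of $f$ along $e_1$; after simplification this collapses to the pointwise identity
\[
f\,\Delta f=\tfrac{4c}{3}f^2+4f^4+|\grad f|^2\qquad\text{on }U.
\]
The key point is that the right–hand side is smooth on all of $M$, so the identity extends to $\overline U$ by continuity, while on the open set $M\setminus\overline U$ one has $\grad f\equiv 0$ and hence $\Delta f=0$. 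Integrating over the closed manifold $M$, using $\int_M f\,\Delta f=-\int_M|\grad f|^2$ together with $\int_{\overline U}|\grad f|^2=\int_M|\grad f|^2$, gives
\[
-2\int_M|\grad f|^2=\int_{\overline U}\Big(\tfrac{4c}{3}f^2+4f^4\Big)\ge 0,
\]
the inequality holding because $c>0$. As the left–hand side is $\le 0$, both sides vanish, so $\grad f\equiv 0$ and $f$ is constant.

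Once $f$ is constant, $M$ is a compact spacelike surface of constant mean curvature in $\mathbb{S}_1^3(c)$, and such a surface is totally umbilical by the theorem of Montiel (with $f\equiv 0$ giving the totally geodesic round sphere). The step I expect to be the main obstacle is the derivation of the $\Delta f$ identity: it requires careful bookkeeping of the Codazzi and Gauss equations in the principal frame, and in particular one must verify that all the $1/f$ contributions cancel so that $f\,\Delta f$ extends smoothly across $\partial U$. A secondary technical point is the treatment of the critical set $\{\grad f=0\}$ when splitting the integral. The appeal to Montiel's rigidity for the constant mean curvature case is the one external ingredient, and could alternatively be replaced by a Hopf–differential argument for the genus once $f$ is known to be constant.
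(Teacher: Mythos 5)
Your proposal is correct, and it takes a genuinely different route from the paper's. I verified your core computation: on $U=\{\grad f\neq 0\}$ one has $Ae_1=fe_1$, $Ae_2=-3fe_2$, the Codazzi equation gives $\nabla_{e_1}e_1=0$ and $b:=\inner{\nabla_{e_2}e_1}{e_2}=-3\vert\grad f\vert/(4f)$ (and, as you observe, its $e_2$-component already forces $f\neq 0$ throughout $U$), and equating the intrinsic curvature $K=-e_1(b)-b^2$ with the Gauss value $K=c+3f^2$ does produce exactly $f\,\Delta f=\tfrac{4c}{3}f^2+4f^4+\vert\grad f\vert^2$, with no leftover $1/f$ terms, in the convention $\Delta=\trace\Hess$ (note the paper uses the opposite sign convention $-\Delta=\trace\Hess$; you are internally consistent, so this is harmless). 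The paper instead stays inside the Cheng--Yau framework it built for the higher-dimensional theorems: it combines the biconservative form of the Cheng--Yau formula \eqref{new versionof CY} with Lemma \ref{cheninequfor spclike}, notes that at $m=2$ the gradient terms cancel exactly so that \eqref{mainequfor surfface} yields $\Delta(\vert A\vert^2+8f^2)\le 0$, and then uses compactness to force $R_{1212}(\lambda_1-\lambda_2)^2\equiv 0$, contradicting the explicit value \eqref{Q} on $U$; both proofs then invoke the same external rigidity of compact spacelike CMC surfaces in $\mathbb{S}_1^3(c)$ (Akutagawa's corollary in the paper, Montiel in yours --- either reference suffices). What the paper's route buys is uniformity: no frame computations, and the same $\square$-operator machinery as in Section 3. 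What your route buys is that it is more elementary and self-contained (only Codazzi, Gauss, and integration by parts), and it is tighter at two delicate points: first, you explicitly rule out zeros of $f$ inside $U$, whereas the paper's concluding ``$0=\frac 12 (c+\frac 34 m^2f^2)(4m^2f^2)$, which is impossible'' is not literally a contradiction at a point of $U$ where $f=0$ (one must add that $f$ cannot vanish identically on $U$); second, the paper's subharmonicity step requires $R_{1212}(\lambda_1-\lambda_2)^2\ge 0$ on all of $M$ but only verifies this sign on $U$, while your argument integrates an identity valid on $\overline{U}$ together with $\Delta f=0$ on the interior of the critical set, so no global sign control is needed.
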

	\section{Preliminaries}
	Consider the real vector space $\mathbb{R}^{m+2}$ equipped with the Lorentzian metric $\langle , \rangle$ given by
	$$\inner vw=-v_0w_0+\sum_{i=1}^{m+1}v_iw_i$$
	for any $v,w\in\mathbb{R}^{m+2}$. With this metric, $\mathbb{R}_1^{m+2}$ is referred to the $(m+2)$-dimensional Minkowski space. Then, the de Sitter space $\mathbb{S}_1^{m+1}(c)$ is defined by
	$$\mathbb{S}_1^{m+1}(c)=\{x\in \mathbb{R}_1^{m+2};\inner xx=\frac{1}{c}\}.$$
	In this way, the de Sitter space $\mathbb{S}_1^{m+1}(c)$ inherits from  $\langle , \rangle$ a metric which makes it Lorentzian manifold with constant sectional curvature $c$. Let $\hat{\nabla}, \bar{\nabla}$ denote the metric connections of $\mathbb{R}_1^{m+2}$ and $\mathbb{S}_1^{m+1}(c)$, respectively, we have
	\begin{equation}\label{connection relation}
		\hat{\nabla}_vw-\bar{\nabla}_vw=-c\inner{v}{w}x,
	\end{equation}
	where $v,w$ are vector fields tangent to $\mathbb{S}_1^{m+1}(c)$.

 Let $\nabla$ denote the Levi-Civita connection associated with the Riemannian metric induced on $M$ by the Lorentzian metric $\langle ,\rangle$. Then, the Weingarten endomorphism $A$ of $\varphi$ is expressed by
 \begin{eqnarray}\label{weingarten endo}
 \bar{\nabla}_v\eta=-Av,\ \ \bar{\nabla}_vw-\nabla_vw=-\inner{Av}{w}\eta,
 \end{eqnarray}
 where $v,w$ are tangent to $M^m$. Using \eqref{weingarten endo}, we deduce that the mean curvature function $f$ is given by
\begin{equation}\label{trace A}
\trace A=-mf.
\end{equation}

Let $R$ denote the curvature tensor field of $M^m$. Then we have
 \begin{equation}
 \label{gauss}	R(X,Y)Z=c\{ \inner{Y}{Z}X-\inner{X}{Z} Y\}-\{ \inner{AY}{Z}AX-\inner{AX}{Z} AY \},
 \end{equation}
 where $X,Y$ and $Z$ are vectors fields tangent to $M^m$.  The Codazzi equation is expressed by 
 $$(\nabla_XA)Y=(\nabla_YA)X.$$
 Taking a local orthonormal frame field $\{E_1,\cdots,E_m\}$ on $M$ which diagonalizes $A$, we obtain from \eqref{gauss} that
 \begin{equation}\label{normalizedscalar}
 	m(m-1)(c-R)=m^2f^2-\vert A\vert^2,
 \end{equation}
where $\vert A\vert^2=\sum\limits_{i,j=1}^m\vert\inner{AE_i}{E_j}\vert^2$ and $R$ is called the normalized scalar curvature given by the  scalar curvature divided by $m(m-1)$ .

Hessian and Laplacian of a $C^2$- function $\alpha$ defined on $M$ is given by
\begin{eqnarray*}
	\Hess\alpha(Y,X)&=&X(Y\alpha)-(\nabla_XY)\alpha,\\
-	\Delta\alpha&=&\trace\Hess\alpha.
\end{eqnarray*}

In order to investigate the properties of a compact spacelike biconservative hypersurfaces, we use the well-established Cheng-Yau formula is given by 
	\begin{equation}\label{Cheng-Yau formula}
-\frac 1 2\Delta\vert S\vert^2=\vert \nabla S \vert ^2+\langle S,\Hess \trace S\rangle+\frac 1 2 \sum_{i,j=1}^{m}R_{ijij}(\mu_i-\mu_j)^2,
	\end{equation}
	where $S$ is a symmetric $(1,1)$ tensor field on an arbitrary  Riemannian manifold $M$ satisfying $(\nabla_XS)Y=(\nabla_YS)X$ and $\mu_i$'s are the eigenvalues of $S$.
	
	Another well-known tool is the Cheng-Yau operator $\square$ associated to a symmetric $(1,1)$ tensor field $\phi$, (see \cite{cheng1977hypersurfaces}). $\square\alpha$ is expressed by
	\begin{equation}
		\square\alpha=\inner{\phi}{\Hess\alpha}
	\end{equation}
	for any $\alpha\in C^2(M)$. If $\phi$ is a divergence-free tensor defined on a compact manifold, then $\square$ is self adjoint, i.e.
	$$\int_M\alpha(\square\beta)=\int_M\beta(\square\alpha).$$
	From which, one can conclude that
	\begin{equation}
\int_M\square\alpha=0.
	\end{equation}
	Before proceeding we would like you to notice that the equation \eqref{Agradf} becomes
	\begin{equation}\label{Agradffor spacelike}
A(\grad f)=\frac{mf}{2}\grad f,
	\end{equation}
due to $M$ is spacelike. Another well-known property of the shape operator $A$ for spacelike hypersurfaces in Lorentzian space forms is that
\begin{equation}\label{div A}
\Div A=-m\grad f.
\end{equation}

 Our main tool is the effective use of the Cheng-Yau $\square$ operator. Notice that  $M$ is Riemannian manifold because of $M$ is spacelike then it has positive definite metric. So, we can use the Cheng-Yau technique as in \cite{rigidityourpaper}. To achieve this, we construct divergence-free tensor fields on $M$, which play a crucial role in classifying such hypersurfaces. In this direction, we present the following proposition.
	\begin{proposition}\label{f2A}
	Let	$\varphi:M^m\to N_1^{m+1}(c)$ be a spacelike biconservative hypersurface $M^m$ immersed into the Lorentzian space form $N_1^{m+1}(c)$. Let symmetric tensor field  $T_1$ is given by $T_1=f^2A$. Then
$$M\text{ is biconservative}\Leftrightarrow\Div T_1=0.$$
	\end{proposition}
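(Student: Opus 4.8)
The plan is to compute $\Div T_1$ head-on via the Leibniz rule for the covariant derivative of the scalar $f^2$ times the $(1,1)$-tensor $A$, and then to notice that the answer factors precisely through the biconservativity operator of \eqref{Agradffor spacelike}. I would fix a local orthonormal frame $\{E_1,\dots,E_m\}$ and use $\Div S=\sum_{i=1}^m(\nabla_{E_i}S)E_i$ for a symmetric $(1,1)$-tensor $S$. Expanding $(\nabla_Y(f^2A))X=Y(f^2)\,AX+f^2(\nabla_YA)X$ and summing over the frame gives the Leibniz-type identity
\begin{equation*}
\Div T_1=A(\grad(f^2))+f^2\,\Div A.
\end{equation*}
Since $\grad(f^2)=2f\grad f$, and since the shape operator of a spacelike hypersurface obeys \eqref{div A}, i.e. $\Div A=-m\grad f$, this reduces to
\begin{equation*}
\Div T_1=2fA(\grad f)-mf^2\grad f=2f\Big(A(\grad f)-\tfrac{mf}{2}\grad f\Big).
\end{equation*}
Everything will be read off from this single factored identity.

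For the forward implication, if $M$ is biconservative then \eqref{Agradffor spacelike} makes the bracketed field vanish identically, so $\Div T_1=0$ immediately. For the converse, $\Div T_1=0$ yields $2f\big(A(\grad f)-\tfrac{mf}{2}\grad f\big)=0$ on all of $M$; dividing by $2f$ recovers \eqref{Agradffor spacelike} on the open set $U=\{f\neq0\}$.

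The one step requiring genuine care — the only real obstacle — is upgrading this from $U$ to all of $M$, i.e. handling the zero set $Z=\{f=0\}$. On $\ici(Z)$ the function $f$ is locally constant, so $\grad f=0$ and both sides of \eqref{Agradffor spacelike} vanish trivially; thus biconservativity holds on $U\cup\ici(Z)$. Because the topological boundary $\partial Z$ is closed with empty interior, $U\cup\ici(Z)$ is dense in $M$, and both $A(\grad f)$ and $\tfrac{mf}{2}\grad f$ vary continuously with the point. Two continuous fields agreeing on a dense set agree everywhere, so \eqref{Agradffor spacelike} holds on all of $M$ and $M$ is biconservative. Alternatively, one may sidestep the zero-set discussion entirely by reading the factored identity as an equality of tensor fields, $\Div T_1=2f\big(A(\grad f)-\tfrac{mf}{2}\grad f\big)$, from which both directions of the equivalence follow directly.
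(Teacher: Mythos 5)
Your proposal is correct and follows essentially the same route as the paper: both rest on the factored identity $\Div T_1=2f\bigl(A(\grad f)-\tfrac{mf}{2}\grad f\bigr)$ (the paper asserts it ``by direct computations''; you supply the Leibniz step and \eqref{div A}), take the forward implication as immediate, and settle the converse by confronting the zero set of $f$ --- the paper via a contradiction on an open set where biconservativity would fail, you via density of $\{f\neq 0\}\cup\ici\{f=0\}$ plus continuity, which is the same idea made slightly more explicit. One caveat: drop your closing ``alternatively'' sentence, since reading the factored identity ``as an equality of tensor fields'' sidesteps nothing --- $2f\bigl(A(\grad f)-\tfrac{mf}{2}\grad f\bigr)=0$ does not formally yield $A(\grad f)=\tfrac{mf}{2}\grad f$, and that implication is precisely the zero-set issue your density argument exists to handle.
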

	\begin{proof}
	By direct computations, we have
	\begin{align}
\Div T_1=2f\bigg\{ A(\grad f)-\frac{mf}{2}\grad f\bigg\}
	\end{align}
	If $M$ is biconservative then he proof is trivial for from \eqref{Agradffor spacelike}.  Now Suppose that $\Div T_1=0$ then
	\begin{equation}
		f=0\mbox{   or   }A(\grad f)=\frac{mf}{2}\grad f
	\end{equation}
	If $A(\grad f)=\frac{mf}{2}\grad f$ on $M$, then $\Div T_1=0.$ For the sake of contradiction assume that $A(\grad f)\neq\frac{mf}{2}\grad f$ at any point of $U$. Then $f=0$ on $U$, which means the $\grad f=0$ on $U$. So, $A(\grad f)=\frac{mf}{2}\grad f$ on $U$ which is a contradiction. Therefore, $\Div T_1=0$ implies that $M$ is biconservative.  
		\end{proof}
	Finally, Throughout this paper, we restrict our analysis to manifolds that are assumed to be connected. 
	\section{biconservative hypersurface with constant  scalar curvature }
In this section, we present the proofs of the theorems  given in the introduction. To do this first we need to the following theorem.
\begin{theorem}\label{maintheo}
	Let $\varphi:M^m\to S_1^{m+1}(c)$ be a compact spacelike  hypersurface in de sitter space with non-negative sectional curvature. If $M$ is non-minimal biconservative and has constant scalar curvature then $M$ is CMC.
\end{theorem}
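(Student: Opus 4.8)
The plan is to apply the Cheng--Yau formula \eqref{Cheng-Yau formula} to the shape operator $S=A$, which is a Codazzi tensor by the Codazzi equation, and then to integrate the resulting identity \emph{after} multiplying it by the weight $f^2$. Because the scalar curvature is constant, relation \eqref{normalizedscalar} gives $|A|^2=m^2f^2+\mathrm{const}$, so $\Delta|A|^2=m^2\Delta(f^2)$ and the left-hand side contributes $-\tfrac{m^2}{2}\int_M f^2\,\Delta(f^2)$. Integrating by parts on the compact manifold $M$, this equals $-2m^2\int_M f^2|\grad f|^2\le 0$, with the sign dictated by the convention $-\Delta\alpha=\trace\Hess\alpha$.

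The decisive step is the cross term $\int_M f^2\langle A,\Hess\trace A\rangle$. Since $\trace A=-mf$ by \eqref{trace A}, this term equals $-m\int_M\langle f^2A,\Hess f\rangle=-m\int_M\square f$, where $\square$ is the Cheng--Yau operator associated with $T_1=f^2A$. By Proposition \ref{f2A}, biconservativity is equivalent to $\Div T_1=0$, so $\square$ is self-adjoint on the compact manifold and $\int_M\square f=0$. Hence the cross term vanishes identically; this is exactly the point at which the biconservativity hypothesis is used.

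After these two reductions the weighted, integrated Cheng--Yau identity reads
\[
-2m^2\int_M f^2|\grad f|^2=\int_M f^2|\nabla A|^2+\frac12\int_M f^2\sum_{i,j}R_{ijij}(\mu_i-\mu_j)^2 .
\]
The right-hand side is nonnegative, since $f^2\ge 0$, $|\nabla A|^2\ge 0$, and the assumption of non-negative sectional curvature forces $R_{ijij}(\mu_i-\mu_j)^2\ge 0$; the left-hand side is nonpositive. Both sides must therefore vanish. In particular $\int_M f^2|\grad f|^2=0$ gives $\grad(f^2)\equiv 0$, so $f^2$ is constant; since $M$ is non-minimal, $f\not\equiv0$, whence $f^2$ is a positive constant, $f$ has constant sign, and by connectedness $f$ is a nonzero constant. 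Thus $M$ is CMC.

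I expect the main obstacle to be the choice of weight rather than any single computation. The \emph{un}weighted integral of \eqref{Cheng-Yau formula} produces a term $-m^2\int_M|\grad f|^2$ of the wrong sign that cannot be absorbed by the nonnegative curvature term, so the argument does not close. Multiplying by $f^2$ is what converts this obstruction into $\int_M\square f=0$, exploiting that the \emph{divergence-free} tensor is $T_1=f^2A$ and not $A$ itself. The remaining care is purely in sign bookkeeping: matching the Laplacian convention and confirming the sectional-curvature sign in the Bochner term so that the two sides of the identity lie on opposite sides of zero.
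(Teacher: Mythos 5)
Your proof is correct and follows essentially the same route as the paper: weighting the Cheng--Yau formula for $A$ by $f^2$ (equivalently, invoking the operator $\square$ associated with the divergence-free tensor $T_1=f^2A$ from Proposition \ref{f2A}), using constancy of the scalar curvature to reduce $\Delta\vert A\vert^2$ to $m^2\Delta(f^2)$, killing the cross term by biconservativity and self-adjointness of $\square$, and using non-negative sectional curvature for the sign of the Bochner term. The only cosmetic difference is the endgame: you conclude directly from $\int_M f^2\vert\grad f\vert^2=0$ that $f^2$ is constant (slightly cleaner), whereas the paper extracts $f^2\vert\nabla A\vert^2=0$ from the same vanishing identity and runs a short open-set contradiction argument to get $\grad f=0$.
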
 
\begin{proof}
	We make use of Cheng-Yau square oparator $\square$  with divergence-free $(1,1)$ tensor $T_1$. So, we have
	\begin{eqnarray}
	\nonumber	\square mf&=&\langle T_1, \Hess mf\rangle\\
		\label{equfirst}	  &=&f^2\langle A,\Hess mf \rangle
	\end{eqnarray}
	Note that Cheng-Yau formula \eqref{Cheng-Yau formula} becomes
		\begin{equation}\label{Cheng-Yau formula New}
		-\frac 1 2\Delta\vert A\vert^2=\vert \nabla A \vert ^2-\langle A,\Hess mf\rangle+\frac 1 2 \sum_{i,j=1}^{m}R_{ijij}(\lambda_i-\lambda_j)^2,
	\end{equation}
	Putting  \eqref{Cheng-Yau formula New} into \eqref{equfirst}, we get
	\begin{equation}\label{mainequ}
		\square mf=f^2\bigg\{\frac 1 2\Delta \vert A\vert^2+\vert \nabla A\vert^2+\frac 1 2 \sum_{i,j=1}^{m}R_{ijij}(\lambda_i-\lambda_j)^2 \bigg\}
	\end{equation}
	Because of the normalized scalar curvature is constant, applying the Laplacian operator $\Delta$ to equation \eqref{normalizedscalar} and multiplying both side by $f^2$, we get
	\begin{equation}
		\frac{f^2}{2} \Delta\vert A\vert^2=\frac{m^2}{2} f^2\Delta f^2.
	\end{equation}
From which we have
\begin{equation}
\int_M	\frac{f^2}{2} \Delta\vert A\vert^2=\int_M\frac{m^2}{2}\langle \grad f^2,\grad f^2 \rangle=\int_M2m^2 f^2\vert \nabla f \vert^2.
\end{equation}
	So integrating \eqref{mainequ} and taking into account equality above, we obtain the following main formula.
	\begin{equation}
			\label{mainequ2}0=\int_M f^2\big\{2m^2\vert\nabla f\vert^2+\vert \nabla A\vert^2+\frac 1 2 \sum_{i,j=1}^{m}R_{ijij}(\lambda_i-\lambda_j)^2 \big\}.
	\end{equation}
	So, we have
	$$f^2\vert\nabla A\vert^2=0.$$
	We claim that $\grad f=0$. For the sake of contradiction, assume that $\grad f\neq 0$ at any point of $U$ in $M$. Eventually, by restricting $U$, we can assume that $f\neq 0$ at any point of $U$. Then $\nabla A=0$ on $U$ which implies $\grad f=0$ on $U$. This is a contradiction. So $M$ is CMC.
\end{proof}

\subsection{\textit{The proof of Corollary \ref{firsttheo}}}
	Because of $R$ and $f$ are constant, it is obvious that they are linearly related. Moreover, $f$ obtains its maximum on $M$.   From this and  compactness, Theorem \ref{firsttheo}  follows immediately from a result of 
	Cheng (see \cite{cheng1990rkh}, Theorem 1 ) .\hfill \qed
	
	Now, we would like to state another result of Theorem \ref{maintheo}  motivated by Haizhong Li( see \cite{li1997}, Theorem 4.1).

	\subsection{\textit{The proof of Corollary \ref{licorollary}}}
		We have from  p.~343 of Li~\cite{li1997} that
		\begin{align}\label{li1}
			\frac 12 \sum_{i,j}R_{ijij}(\lambda_i-\lambda_j)^2&\geq (\vert A\vert^2-mf^2)\bigg(mc-2mf^2+\vert A\vert^2-\frac{m(m-2)}{\sqrt{m(m-1)}}\vert f\vert \sqrt{\vert A\vert^2-mf^2}\bigg)
		\end{align}
		and 
		\begin{align}\label{li2}
			\bigg(mc-2mf^2+\vert A\vert^2&-\frac{m(m-2)}{\sqrt{m(m-1)}}\vert f\vert \sqrt{\vert A\vert^2-mf^2}\bigg)\\
			\nonumber	&=\bigg(\sqrt{\vert A\vert^2-mf^2}-\frac 12(m-2)\vert f\vert\sqrt{\frac{m}{m-1}}\bigg)^2+m\bigg(c-\frac{m^2}{4(m-1)}f^2\bigg)
		\end{align}
		As can be easily seen from \eqref{li1} and \eqref{li2}, the fact that $f^2\leq 4(m-1)c/m^2$ implies that $\sum\limits_{i,j}R_{ijij}(\lambda_i-\lambda_j)^2\geq 0$. The proof follows directly by performing \eqref{mainequ2}.\hfill \qed
		
	Now, we would like to give a result that we do not assume that the scalar curvature is constant. 
	\subsection{\textit{The proof of Theorem \ref{extra result}}}
Since the scalar curvature is the trace of the Ricci curvature, which itself arises as the trace of the sectional curvature, and assuming that the scalar curvature is non-negative, it follows that $f>0$. Actually; from Gauss equation,
\begin{equation}\label{gaussforr=kf}
	m(m-1)(c-kf)=m^2f^2-\vert A\vert^2
\end{equation}
If there exists a point p on $M$ so that $r=0$, we have $f=0$. \eqref{gaussforr=kf} implies
$$m(m-1)c+\vert A\vert^2=0,$$
which is impossible. So, $r>0$ and $f>0$.

Note that we have the equation $\eqref{mainequ}$ since $M$ is biconservative. Applying integration by part, \eqref{mainequ} becomes
\begin{equation}\label{mainequfor r=kf}
	0=\int_M\langle \grad f^2,\grad \vert A\vert^2\rangle +f^2\big\{  \vert \nabla A\vert^2+	\frac 12 \sum_{i,j}R_{ijij}(\lambda_i-\lambda_j)^2 \big\}
\end{equation}
We have from \eqref{gaussforr=kf} that
\begin{equation}\label{delta A^2}
	\grad\vert A\vert^2=m^2\grad f^2+k\grad f
\end{equation}
Substituting \eqref{delta A^2} into \eqref{mainequfor r=kf}, we get
\begin{equation*}
0=\int_M m^2\vert\grad f^2\vert^2+kf\vert\grad f\vert^2+f^2\big\{  \vert \nabla A\vert^2+	\frac 12 \sum_{i,j}R_{ijij}(\lambda_i-\lambda_j)^2 \big\}
\end{equation*}
This implies $\grad f=0$ at any point on $M$ since $f>0$ at any point on $M$. So $f$ obtains on maximum on $M$. Thus, because of the compactness $M$ is isometric to a sphere $\mathbb{S}^m(c_0)$,  $1<c_0<c$, from a result of Cheng (\cite{cheng1990rkh}, Theorem 1).\hfill \qed 
\section{Compact spacelike biconservative surfaces}
Before starting, we would like to refer to Yu Fu's works that he classified biconservative surfaces in 3-dimensional space forms in \cite{fu2013bi} and \cite{fu2015explicit}. Here, we shall classify the compact ones of them. In order to do this, making use of Chen inequality given in Riemmanian manifold, we give an useful inequalitiy between $\vert \nabla A\vert^2$ and $\vert \grad f\vert^2$. Afterward we apply for Cheng-Yau formula.
\begin{lemma}\label{cheninequfor spclike}
	Let $\varphi:M^m\to N_1^{m+1}(c)$ be compact spacelike  hypersurface. If $M^m$ is biconservative then
	\begin{equation}
		\vert \nabla A\vert^2\geq\frac{m^2(m+2)}{4(m-1)}\vert \grad f \vert^2
	\end{equation}
\end{lemma}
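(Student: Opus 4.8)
The plan is to establish the estimate pointwise. At a point where $\grad f=0$ the right-hand side vanishes, so I fix a point $p$ with $\grad f(p)\neq 0$ and choose an orthonormal basis $\{E_1,\dots,E_m\}$ of $T_pM$ diagonalizing $A$, $AE_i=\lambda_iE_i$, ordered so that $E_1=\grad f/\vert\grad f\vert$. By biconservativity \eqref{Agradffor spacelike} the gradient is an eigendirection, so this choice is legitimate, $\lambda_1=\tfrac{mf}{2}$, and $E_j(f)=0$ for all $j\geq 2$ since $\grad f$ is proportional to $E_1$.

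I would then work with the fully symmetric tensor built from $\nabla A$: setting $h_{ijk}=\inner{(\nabla_{E_k}A)E_i}{E_j}$, the Codazzi equation $(\nabla_XA)Y=(\nabla_YA)X$ together with the symmetry of $A$ makes $h_{ijk}$ symmetric in all three indices, so $\vert\nabla A\vert^2=\sum_{i,j,k}h_{ijk}^2$. Two families of linear relations are then available. Differentiating the scalar \eqref{trace A} gives $\sum_i h_{iik}=E_k(\trace A)=-mE_k(f)$, hence $\sum_i h_{ii1}=-m\vert\grad f\vert$ and $\sum_i h_{iik}=0$ for $k\geq 2$; differentiating the eigenvalue relation $\lambda_1=\tfrac{mf}{2}$ (valid in a neighbourhood of $p$, where $E_1$ is canonically defined) gives $h_{111}=\tfrac m2 E_1(f)=\tfrac m2\vert\grad f\vert$, while $h_{11k}=\tfrac m2 E_k(f)=0$ for $k\geq 2$.

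The core of the argument, which is the Chen-type estimate referred to above, is to discard all but the components lying in the gradient direction and to apply Cauchy--Schwarz. From the relations above, $\sum_{j\geq 2}h_{jj1}=\sum_i h_{ii1}-h_{111}$ is completely determined by \eqref{trace A} and \eqref{Agradffor spacelike}, and Cauchy--Schwarz over these $m-1$ numbers gives $\sum_{j\geq 2}h_{jj1}^2\geq\frac{1}{m-1}\big(\sum_{j\geq 2}h_{jj1}\big)^2$. Retaining the term $h_{111}^2$ (the triple $(1,1,1)$) together with the $\{1,j,j\}$--type terms, which by total symmetry occur with multiplicity three, yields
\[
\vert\nabla A\vert^2\geq h_{111}^2+3\sum_{j\geq 2}h_{jj1}^2\geq\frac{m^2(m+2)}{4(m-1)}\vert\grad f\vert^2 ,
\]
the last inequality being the Cauchy--Schwarz bound combined with $h_{111}=\tfrac m2\vert\grad f\vert$; in fact this retains only part of $\vert\nabla A\vert^2$, so the stated constant is comfortably attained and there is slack to spare.

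The step I expect to be delicate is this final one: one must track exactly which components of the symmetric tensor $\nabla A$ are forced to be nonzero by biconservativity and count their multiplicities correctly, since it is precisely the clash between $h_{111}=\tfrac m2\vert\grad f\vert$ and the trace relation $\sum_i h_{ii1}=-m\vert\grad f\vert$ that forces $\vert\nabla A\vert^2$ to grow like $m^2\vert\grad f\vert^2$. Without biconservativity one has only $\Div A=-m\grad f$, and the generic Chen inequality $\vert\nabla A\vert^2\geq\tfrac{3m^2}{m+2}\vert\grad f\vert^2$ is far too weak for large $m$; the eigenvalue constraint $\lambda_1=\tfrac{mf}{2}$ is exactly what upgrades it. I would finally note that the eigenframe is needed only at the single point $p$, so no smoothness of the $E_j$ ($j\geq 2$) at points of higher multiplicity is required, which suffices because the inequality is pointwise.
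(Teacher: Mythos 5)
Your proposal is correct and follows essentially the same route as the paper's own proof: the unit field $E_1=\grad f/\vert\grad f\vert$, total symmetry of $\nabla A$ via the Codazzi equation, retention of the $(1,1,1)$ and $(1,j,j)$-type components (with multiplicity three), and Cauchy--Schwarz applied to $\sum_{j\geq 2}h_{jj1}$, with $h_{111}=\tfrac m2\vert\grad f\vert$ coming from the biconservativity relation $\lambda_1=\tfrac{mf}{2}$. The only divergence is sign bookkeeping: using the paper's stated convention $\trace A=-mf$ consistently, as you do, gives $\sum_{j\geq 2}h_{jj1}=-\tfrac{3m}{2}\vert\grad f\vert$ and hence the stronger constant $\tfrac{m^2(m+26)}{4(m-1)}$, whereas the paper's computation \eqref{Aii1} tacitly takes $\trace A=+mf$ and lands exactly on $\tfrac{m^2(m+2)}{4(m-1)}$; either way the stated inequality follows.
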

\begin{proof}
	Obviously, if $\grad f=0$ the inequality holds true automatically. Now, assume that $\grad f\neq 0$ at a point $p$ on $M$. Then $\grad f$ doesn't vanish throughout an open neighborhood of $p$. In this neighborhood, consider orthonormal frame  field $\{E_1=\frac{\grad f}{\vert\grad f\vert}, E_2,\cdots,E_m\}$. Then we have
	$$A(E_1)=\frac{mf}{2}E_1$$
	Now,
	\begin{align}
	\nonumber	\vert\nabla A\vert^2=&\sum_{i,j=1}^m\vert(\nabla A)(E_i,E_j)\vert^2=\sum_{i,j=1}^m\langle (\nabla A)(E_i,E_j),E_1\rangle^2\\
	\label{nabla A>}	&\geq\langle (\nabla A)(E_1,E_1),E_1\rangle+\frac{3}{m-1}\left(\sum_{i,j=1}^m\langle (\nabla A)(E_i,E_i),E_1\rangle\right)^2.
	\end{align}
	note that
	\begin{equation}\label{A11}
\vert \langle (\nabla A)(E_1,E_1),E_1\rangle\vert^2=\vert E_1\langle AE_1,E_1\rangle \vert^2=\vert E_1(\frac{mf}{2})\vert^2=\frac{m^2}{4}\vert \grad f\vert^2
	\end{equation}
	and
	\begin{eqnarray}\label{Aii1}
\nonumber\left(\sum_{i,j=2}^m \langle(\nabla A)(E_i,E_i),E_1\rangle\right)^2&=&\left(\sum_{i,j=2}^m E_1\langle A(E_i),E_i\rangle \right)^2\\
\nonumber&=&\vert E_1(mf-\lambda_1) \vert^2\\
&=&\frac{m^2}{4}\vert\grad f\vert^2
	\end{eqnarray}
	Substituting \eqref{A11} and \eqref{Aii1} into \eqref{nabla A>} we get
	\begin{align*}
		\vert\nabla A\vert^2\geq& \frac{m^2}{4}\vert\grad f\vert^2+\frac{3}{m-1}\frac{m^2}{4}\vert\grad f\vert^2\\
		&=\frac{m^2(m+2)}{4(m-1)}\vert\grad f\vert^2.
	\end{align*}
\end{proof}

Now, since $M$ is Riemannian manifold, we have
\begin{equation}\label{div A gradf}
	\Div A(\grad mf)=\langle \Div A,\grad mf\rangle+\langle A,\Hess mf\rangle.
\end{equation}
Because $M$ is biconservative then
\begin{eqnarray}\label{laplacian f2}
\nonumber m^2\Div (\grad f^2)&=&-m^2\langle \grad f,\grad f\rangle+\langle A, \Hess mf\rangle\\
-m^2\Delta f^2&=&-m^2\vert\grad f\vert^2+\langle A,\Hess mf\rangle
\end{eqnarray}
Subsituting \eqref{laplacian f2} into \eqref{Cheng-Yau formula New}, we get
\begin{equation}\label{new versionof CY}
	-\frac{1}{2}\Delta (\vert A\vert^2+2m^2f^2)=\vert\nabla A\vert^2-m^2\vert \grad f\vert^2 +\frac 1 2 \sum_{i,j=1}^{m}R_{ijij}(\lambda_i-\lambda_j)^2
\end{equation}
Considering Lemma \ref{cheninequfor spclike} with equation \eqref{new versionof CY}, we get
\begin{equation}\label{mainequfor surfface}
-\frac{1}{2}\Delta (\vert A\vert^2+2m^2f^2)\geq  -\frac{3m^2(m-2)}{4(m-1)}\vert\grad f\vert^2+\frac 1 2 \sum_{i,j=1}^{m}R_{ijij}(\lambda_i-\lambda_j)^2
		\end{equation}
		Now we can give the proof.
\subsection{The proof of Theorem \ref{surfacetheo}}
First notice that $m=2$ since $M$ is a surface in de Sitter space $\mathbb{S}_1^3(c)$ and
\begin{equation}\label{intRiem>0}
\frac 12\sum_{i,j=1}^m R_{ijij}(\lambda_i-\lambda_j)^2=R_{1212}(\lambda_1-\lambda_2)^2
\end{equation}

We shall show that $\grad f=0$. For the sake of contradiction assume that $\grad f\neq 0$ on a neigbourhood $U$ of $p\in M$. Then we can choose $\lambda_1=\frac{mf}{2}$ on $U$ due to the biconservativity equation \eqref{Agradffor spacelike}. Using \eqref{trace A}, we can say that $\lambda_2=-\frac{3mf}{2}$.

We have $R_{1212} = c - \lambda_1 \lambda_2$ from the Gauss equation. So, we deduce that
\begin{equation}\label{Q}
	R_{1212}(\lambda_1-\lambda_2)^2=\frac 12 (c+\frac 34 m^2f^2)(4m^2f^2). 
\end{equation}
Because of the fact that $m=2$ and \eqref{Q} is non negative, \eqref{mainequfor surfface} implies that
$$\Delta (\vert A\vert^2+2m^2f^2)\leq 0.$$
Because of compactness of $M$, this with \eqref{mainequfor surfface} gives
$$0=\frac 12 (c+\frac 34 m^2f^2)(4m^2f^2),$$
which is impossible. So, $\grad f=0.$ on any $U$.

Therefore $M$ is CMC. The proof is concluded by the Corollary of Akugatawa \cite{akutagawa1987}.\hfill\qed

\textbf{Open Problem}

 Our findings on compact spacelike biconservative hypersurfaces in de Sitter space  under specific geometric constraints, naturally lead to the following question.
 
 \textit{ Must every compact spacelike biconservative hypersurface with constant scalar curvature in de Sitter space be totally umbilic?}
 
	\textbf{Acknowledgements} 
	
	On behalf of all authors, the corresponding author states that there is no conflict of interest.
	
	The manuscript has no associate data.
	\bibliographystyle{abbrv}
	\bibliography{references.bib}
\end{document}